\tikzstyle{vertex}=[circle, draw, inner sep=0pt, minimum size=6pt] % style
\definecolor{LemonChiffon}{rgb}{100, 98, 80}
\definecolor{myblue}{rgb}{0,0.4,0.8}
\definecolor{orange}{rgb}{1, 0.4, 0}
\definecolor{mygreen}{rgb}{0, 0.8, 0.2}
\definecolor{myred}{rgb}{204, 0, 0}
\definecolor{violet}{RGB}{0.4,0.2,1}
\definecolor{brown}{rgb}{0.6, 0.4, 0}
\newtheorem{theorem}{Theorem}[section]
\newtheorem{lemma}[theorem]{Lemma}
\theoremstyle{definition}
\theoremstyle{definition}
\newcommand{\ALOOP}[1]{\ALC@it\algorithmicloop\ #1%
  \begin{ALC@loop}}
\newcommand{\ENDALOOP}{\end{ALC@loop}\ALC@it\algorithmicendloop}
\begin{document}

\title{Planarity of generalized ladder graphs}
\author{Hojin Chu$^{1}$, Suh-Ryung Kim$^{1}$, Homoon Ryu$^{1}$ \\
 {\footnotesize $^{1}$ \textit{Department of Mathematics Education,
Seoul National University,}}\\{\footnotesize\textit{
Seoul 08826, Rep. of Korea}}\\
{\footnotesize\textit{
ghwls8775@snu.ac.kr, srkim@snu.ac.kr, ryuhomun@naver.com}}\\
{\footnotesize}}
\date{}
\maketitle

\begin{abstract}
The Cartesian product of $P_2$ and $P_n$ is called an $n$-ladder graph for a positive integer $n$.
%A ladder graph is an $n$-ladder graph for some integer $n$.
We call two paths $P_m$ and $P_n$ together with some edges each of which joins a vertex on $P_m$ and a vertex on $P_n$ a generalized $(m,n)$-ladder graph.
In this paper, we completely characterize the planar generalized ladder graphs and the outerplanar generalized ladder graphs.
A functigraph $C(P_n,f)$ is a generalized $(n,n)$-ladder graph.
Consequently, our result solves the problem posed by A. Chen {\it et al.} (2011) to characterize planar functigraphs $C(P_n,f)$.
%Further, we provide an $O(n)$-time algorithm for planarity testing of a generalized ladder graph of order $n$ with at most $3n-6$ edges, which is a necessary condition for a planar graph.
 \end{abstract}
{\small
\noindent  \textbf{\textit{Keywords.}} Ladder graph, Generalized ladder graph, Planarity of graph, Planar embedding
\\ \textbf{\textit{2020 Mathematics Subject Classification.}} 05C10, 05C62 }

 \section{Introduction}
 In this paper, all the graphs are assumed to be undirected and simple.
 For a graph $G$, we call $V(G)$ the {\it vertex set} of $G$ and $E(G)$ the {\it edge set} of $G$.

%linear time planarity testing algorithm을 처음 제시한 논문에 쓰인 표현
%An aㅓbitrary (undirected) graph with V vertices may have as many as E = V(V -- 1)/'2 edges. However, a planar graph has E _< 3V - 3 by Lemma 1. Thus it may be possible to devise a planarity algorithm with a time bound which is linear in the number of vertices.

We denote a path of length $n-1$ by $P_n$ for a positive integer $n$.
The Cartesian product of $P_2$ and $P_n$ is called an {\it $n$-ladder graph} for a positive integer $n$.
%A {\it ladder graph} is an $n$-ladder graph for some positive integer $n$.
We call two paths $P_m$ and $P_n$ together with some edges each of which joins a vertex on $P_m$ and a vertex on $P_n$ a {\it generalized $(m,n)$-ladder graph}.
We say that a graph is a {\it generalized ladder graph} if it is a generalized $(m,n)$-ladder graph for some positive integers $m$ and $n$.

Given a generalized $(m,n)$-ladder graph $G$, we denote the two paths in $G$ by $G_1:=u_1u_2\cdots u_m$ and $G_2:=v_1v_2\cdots v_n$.
In addition, let $[G_1,G_2]$ denote the set of edges each of which joins a vertex in $G_1$ and a vertex in $G_2$.
For each edge $e$ in $[G_1,G_2]$, we denote by $l(e)$ and $r(e)$ the indices of the end of $e$ on $G_1$ and the end of $e$ on $G_2$, respectively.
Furthermore,
we define the following edge sets for each edge $e$ in $[G_1,G_2]$:
\[L_G^{\uparrow}(e)=\{e'\in [G_1,G_2] \colon\, l(e)<l(e')\le m \}; \quad L_G^{\downarrow}(e)=\{e'\in [G_1,G_2] \colon\, 1\le l(e')< l(e) \}; \]
\[R_G^{\uparrow}(e)=\{e'\in [G_1,G_2] \colon\, r(e)< r(e')\le n \}; \quad R_G^{\downarrow}(e)=\{e'\in [G_1,G_2] \colon\, 1 \le r(e') < r(e)\}.\]

A graph $G$ is said to be {\it planar} if there is a planar embedding which is isomorphic to $G$ and its edges intersect only at their ends.
 A graph $G$ is said to be {\it outerplanar} if there is a planar embedding for which all the vertices belong to the outer face.
 The following two theorems are well-known results for the planarity and the outerplanarity.
\begin{theorem}[Kuratowski \cite{grph}]\label{thm:kur} A graph is planar if and only if it contains no subdivision of either $K_5$ or $K_{3,3}$.
\end{theorem}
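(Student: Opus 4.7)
The plan is to handle the two directions of the biconditional separately. The forward direction is the easier half: if $G$ is planar, it contains no subdivision of $K_5$ or $K_{3,3}$. I would first show that $K_5$ and $K_{3,3}$ are themselves non-planar using Euler's formula $|V|-|E|+|F|=2$ for a connected planar graph, which yields the edge bounds $|E|\le 3|V|-6$ in general and $|E|\le 2|V|-4$ in the bipartite case; these are violated by $K_5$ and $K_{3,3}$ respectively. Since planarity is preserved under taking subgraphs and under subdividing or un-subdividing edges, a planar graph cannot contain any subdivision of a non-planar graph.

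For the converse I would argue by contradiction on a minimum counterexample: a non-planar graph $G$ avoiding both subdivisions with the fewest edges. First I would reduce to the $3$-connected case. If $G$ is disconnected or has a cut vertex, each block of $G$ is a strictly smaller counterexample, which is impossible. If $G$ has a $2$-vertex cut $\{x,y\}$, I would split $G$ along $\{x,y\}$ and add the edge $xy$ to each side; each resulting graph is smaller and still avoids the forbidden subdivisions, because any such subdivision that uses the added edge could be completed to one inside $G$ via a path through the other side. Minimality makes the two pieces planar, and their embeddings can be glued along a face containing $xy$ to embed $G$, a contradiction.

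With $G$ now $3$-connected, the plan is to contract an edge. Thomassen's contraction theorem supplies an edge $e$ for which $G/e$ is still $3$-connected, and $G/e$ still avoids $K_5$ and $K_{3,3}$ subdivisions, since a minor of $G/e$ lifts to a minor of $G$, which for these two targets coincides with having a subdivision. Hence $G/e$ is planar by minimality, and Whitney's theorem makes its embedding essentially unique. The idea is then to split the contracted vertex back into the two ends $x,y$ of $e$ inside this fixed embedding, redistributing their neighbors into a common face. The main obstacle, and the combinatorial heart of the theorem, is the case analysis showing that when no such splitting can be realized planarly, the cyclic arrangement of the neighbors of $x$ and $y$ around the contracted vertex pins down a concrete $K_5$ or $K_{3,3}$ subdivision already sitting inside $G$, contradicting the standing hypothesis on $G$.
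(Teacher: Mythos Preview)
The paper does not prove this statement at all: Theorem~\ref{thm:kur} is quoted as a classical result with a reference to \cite{grph}, and is then used as a black box in the proof of Theorem~\ref{thm:planar}. There is therefore no ``paper's own proof'' to compare your proposal against.

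That said, your outline is a faithful sketch of the standard Thomassen-style proof of Kuratowski's theorem. The forward direction via Euler's formula and the invariance of planarity under subdivision is correct. For the converse, your reduction to the $3$-connected case is the usual one; the only point worth tightening is the claim that each piece obtained by adding $xy$ across a $2$-cut ``still avoids the forbidden subdivisions'': you should say explicitly that a forbidden subdivision using the new edge $xy$ can be rerouted through an $(x,y)$-path in the other piece, yielding a forbidden subdivision in $G$. In the $3$-connected step, your passage from ``$G$ has no $K_5$ or $K_{3,3}$ subdivision'' to ``$G/e$ has no $K_5$ or $K_{3,3}$ subdivision'' goes via minors, and you correctly note that for the pair $\{K_5,K_{3,3}\}$ the minor and topological-minor conditions coincide; just be aware that this equivalence itself requires a short argument (it is not automatic for $K_5$ alone). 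The final splitting-back argument and the case analysis producing a $K_5$ or $K_{3,3}$ subdivision are exactly where the work lies, and you have identified that correctly even if the details are left unwritten.
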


\begin{theorem}[Chartrand and Harary \cite{outpl}]\label{thm:outpl}
A graph is outerplanar if and only if it contains no subdivision of either $K_4$ or $K_{3,2}$.
\end{theorem}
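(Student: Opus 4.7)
The natural strategy is to reduce Theorem~\ref{thm:outpl} to Theorem~\ref{thm:kur} via the \emph{universal vertex} construction. For a graph $G$, let $G^{+}$ denote the graph obtained from $G$ by adjoining a new vertex $v$ adjacent to every vertex of $G$. The key lemma I would establish first is that $G$ is outerplanar if and only if $G^{+}$ is planar. Indeed, an outerplanar embedding of $G$ places every vertex of $G$ on the outer face, so one can place $v$ in that face and draw all edges from $v$ without crossings; conversely, in any planar embedding of $G^{+}$ the vertex $v$ sits inside some face $F$, and all its neighbours, namely all of $V(G)$, must lie on $\partial F$, yielding an outerplanar embedding of $G$ after removing $v$.

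For the forward (``only if'') direction I would verify directly that $K_4$ and $K_{3,2}$ fail to be outerplanar: via the reduction just stated, $K_4^{+}=K_5$ is nonplanar, and $K_{3,2}^{+}$ contains $K_{3,3}$ as a subgraph (take the two-vertex part of $K_{3,2}$ together with $v$ on one side and the three-vertex part on the other), hence is also nonplanar. Since outerplanarity is preserved under taking subgraphs and under subdividing or suppressing degree-two vertices, no graph containing a subdivision of $K_4$ or $K_{3,2}$ can be outerplanar.

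For the converse I would argue contrapositively. If $G$ is not outerplanar, then $G^{+}$ is not planar, and by Theorem~\ref{thm:kur} it contains a subdivision $H$ of $K_5$ or $K_{3,3}$. The task is to extract a subdivision of $K_4$ or $K_{3,2}$ lying entirely in $G$, and I would split on the role of $v$ in $H$. If $v$ is a branch vertex of $H$, then deleting $v$ immediately yields in $G$ a subdivision of the appropriate smaller forbidden graph: $K_4$ when $H$ subdivides $K_5$, and $K_{3,2}$ when $H$ subdivides $K_{3,3}$. If $v$ is not a branch vertex, then $v$ has degree at most two in $H$, so it lies on at most one subdivision path; after discarding that single path, what remains in $G$ is a subdivision of $K_5$ (respectively $K_{3,3}$) with one branch-edge missing, from which a $K_4$- (respectively $K_{3,2}$-) subdivision is obtained by deleting one carefully chosen branch vertex.

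The main obstacle is this reverse direction, and specifically the bookkeeping needed to check that after removing $v$ (and possibly one subdivision path) the remaining branch vertices really are joined in $G$ by internally disjoint paths realising the desired Kuratowski subgraph. The argument becomes pure combinatorics once the universal-vertex reduction is available, and no deeper structural input beyond Theorem~\ref{thm:kur} should be needed.
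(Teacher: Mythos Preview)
The paper does not supply its own proof of Theorem~\ref{thm:outpl}: it is quoted as a known result of Chartrand and Harary and then used as a black box in the proofs of Lemma~\ref{lem:outerplanar} and Theorem~\ref{thm:outerplanar}. So there is no argument in the paper for you to be compared against.

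That said, your proposal is the standard and correct route. The reduction via the universal-vertex graph $G^{+}$, together with Kuratowski's theorem, is exactly how this characterisation is usually derived; the forward direction is immediate, and your case split on whether the new vertex $v$ is a branch vertex of the Kuratowski subdivision is the right organisation for the converse. The only place to be careful is the sub-case where $v$ is an interior vertex of a single subdivision path: once that path is discarded you are left, inside $G$, with a subdivision of $K_5$ or $K_{3,3}$ with one branch edge missing, and you should state explicitly which branch vertex to delete (any endpoint of the missing edge works) so that the remaining branch vertices and paths visibly realise $K_4$ or $K_{3,2}$. With that detail spelled out the argument is complete.
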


In this paper, we present the following theorems that determine the planarity and the outerplanarity of generalized ladder graphs.

\begin{theorem}\label{thm:planar}
Let $G$ be a generalized ladder graph.
Then $G$ is planar if and only if for each edge $e \in [G_1, G_2]$, at least one of the following edge sets is empty:
\[ L_G^{\uparrow}(e) \cap R_G^{\downarrow}(e); \quad
L_G^{\uparrow}(e) \cap  R_G^{\uparrow}(e); \quad
L_G^{\downarrow}(e) \cap R_G^{\uparrow}(e); \quad
L_G^{\downarrow}(e) \cap R_G^{\downarrow}(e).\]
\end{theorem}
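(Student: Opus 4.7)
The plan is to prove the two implications separately: necessity of the quadrant condition via Kuratowski's theorem (Theorem~\ref{thm:kur}), and sufficiency via an explicit planar embedding.

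For the ``only if'' direction, I would argue by contrapositive. Suppose some edge $e \in [G_1, G_2]$ has all four intersection sets nonempty; pick one witness $e_i$ from each quadrant and abbreviate $a = l(e)$, $b = r(e)$, $a_i = l(e_i)$, $b_i = r(e_i)$. By definition $a_3, a_4 < a < a_1, a_2$ and $b_1, b_4 < b < b_2, b_3$. I would exhibit a $K_{3,3}$ minor inside the subgraph $H := G_1 \cup G_2 \cup \{e, e_1, e_2, e_3, e_4\}$ with one side of the bipartition consisting of the subpath of $G_1$ strictly to the left of $u_a$ (which contains $u_{a_3}$ and $u_{a_4}$), the subpath of $G_1$ strictly to the right of $u_a$ (containing $u_{a_1}$ and $u_{a_2}$), and the singleton $\{v_b\}$; the other side consists of the two analogous subpaths of $G_2$ on either side of $v_b$ together with the singleton $\{u_a\}$. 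The nine required inter-part adjacencies are supplied by $e$, the four rungs $e_i$, and the four edges of $G_1 \cup G_2$ joining $u_a$ or $v_b$ to a neighbouring subpath. Because each branch set is a subpath with at most three attachment points or a singleton, the $K_{3,3}$ minor refines to a proper $K_{3,3}$ subdivision, and Theorem~\ref{thm:kur} delivers non-planarity.

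For sufficiency, I would proceed by induction on $|[G_1, G_2]|$. The base case is trivial. For the step, remove an extremal rung $e^{*}$; since deleting a rung only shrinks each of the four quadrant sets for every other rung, the hypothesis passes to $G - e^{*}$, so induction furnishes a planar embedding. The remaining task is to reinsert $e^{*}$ without crossings, i.e., to ensure that its two endpoints lie on a common face. To enforce this I would strengthen the inductive hypothesis to maintain a structural invariant on the embedding (for instance, that all four endpoints of $G_1$ and $G_2$ lie on the outer face), and use the empty quadrant of $e^{*}$ to pin down the admissible face.

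The main obstacle is the sufficiency direction. The quadrant condition is purely local (one condition per rung), whereas a planar embedding requires global coherence in routing all rungs simultaneously. The hardest step is guaranteeing that the inductive embedding of $G - e^{*}$ accommodates $e^{*}$; in general it will not, so one must either strengthen the inductive hypothesis substantially, or bypass induction entirely via a direct construction: classify each rung by which of its four quadrants is empty, route each class in a designated region (above $G_1$, within the strip between $G_1$ and $G_2$, or below $G_2$), and verify non-crossing within each region from the quadrant data.
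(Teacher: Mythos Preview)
Your necessity argument is correct and matches the paper's in substance: the paper also picks one witness from each of the four quadrants and exhibits a $K_{3,3}$ obstruction. The only cosmetic difference is that the paper names six specific branch \emph{vertices} (namely $u_{l(a)}$, $v_{r(a)}$, and four more chosen as the inner of each pair of witness-endpoints) and checks the nine internally disjoint paths directly, whereas you phrase it as a minor with four subpath branch sets and two singletons. Both work.

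For sufficiency, the gap you flag is real: the bare induction on $|[G_1,G_2]|$ does not go through without a much stronger invariant, and ``all four path-endpoints lie on the outer face'' is not enough to guarantee that the two endpoints of the reinserted rung share a face. The paper does not attempt induction at all; it carries out precisely the direct construction you sketch in your last sentence. Concretely, it sets
\[
A=\{e: L^\uparrow\cap R^\downarrow=\emptyset\},\quad B=\{e: L^\uparrow\cap R^\uparrow=\emptyset\},\quad C=\{e: L^\downarrow\cap R^\uparrow=\emptyset\},\quad D=\{e: L^\downarrow\cap R^\downarrow=\emptyset\},
\]
takes the disjoint refinement $X=A$, $Y=B\setminus A$, $Z=C\setminus(A\cup B)$, $W=D\setminus(A\cup B\cup C)$, places $G_1$ and $G_2$ on two parallel vertical segments, and routes each of the four classes through its own explicit polygonal region (one class straight across, the other three detouring around the ends). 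The bulk of the proof is then a case-by-case verification that (i) rungs within a single class are pairwise non-crossing (this uses the defining empty-quadrant condition to force a monotone ordering of endpoints), and (ii) rungs from different classes are separated by the region boundaries, with the disjointness of $X,Y,Z,W$ handling the borderline cases. So your fallback plan is exactly right; what is missing from your proposal is the actual execution of that non-crossing verification, which is where all the work lies.
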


\begin{theorem}\label{thm:outerplanar}
Let $G$ be a generalized ladder graph.
Then $G$ is outerplanar if and only if one of the following holds:
\begin{itemize}
\item[(i)] $L_G^{\uparrow}(e) \cap R_G^{\uparrow}(e)=L_G^{\downarrow}(e) \cap R_G^{\downarrow}(e)=\emptyset$ for each edge $e \in [G_1, G_2]$;
\item[(ii)] $L_G^{\uparrow}(e) \cap R_G^{\downarrow}(e)=L_G^{\downarrow}(e) \cap R_G^{\uparrow}(e)=\emptyset$ for each edge $e \in [G_1, G_2]$.
\end{itemize}
\end{theorem}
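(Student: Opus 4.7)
The plan has two directions, using Theorem~\ref{thm:outpl}.

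For the ``if'' direction, I construct an explicit outerplanar embedding. If condition (i) holds, I place the vertices of $G$ on a circle in the cyclic order $u_1, u_2, \ldots, u_m, v_1, v_2, \ldots, v_n$, draw the path edges $u_i u_{i+1}$ and $v_j v_{j+1}$ on the boundary arcs, and draw every cross edge $u_a v_b \in [G_1, G_2]$ as a chord joining position $a$ to position $m+b$. A straightforward interleaving check shows that two such chords with $l(e) = a < c = l(e')$ cross if and only if $r(e) = b < d = r(e')$; this is precisely what (i) forbids, so the embedding is outerplanar. If (ii) holds instead, I use the cyclic order $u_1, \ldots, u_m, v_n, v_{n-1}, \ldots, v_1$; the analogous check shows chords $u_a v_b$ and $u_c v_d$ with $a < c$ now cross iff $b > d$, and this is exactly what (ii) rules out.

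For the ``only if'' direction I prove the contrapositive: if both (i) and (ii) fail, then $G$ contains a subdivision of $K_4$ or $K_{2,3}$ and hence is not outerplanar by Theorem~\ref{thm:outpl}. The failure of (i) at some edge yields a \emph{parallel pair} $p_1 = u_a v_b,\ p_2 = u_c v_d$ with $a < c$ and $b < d$, while the failure of (ii) at some edge yields a \emph{crossing pair} $q_1 = u_e v_f,\ q_2 = u_g v_h$ with $e < g$ and $f > h$. I then exhibit a $K_4$- or $K_{2,3}$-subdivision using these four cross edges together with sub-paths of $G_1$ and $G_2$ connecting their endpoints. For example, in the representative situation where the four edges have pairwise distinct endpoints and one of the crossing-pair endpoints lies strictly between $u_a$ and $u_c$ on $G_1$, one obtains a $K_{2,3}$-subdivision whose branch vertices are chosen from $\{u_a, u_c, v_b, v_d\}$ together with an endpoint of the crossing pair, with internally disjoint connecting paths built from segments of $G_1$ and $G_2$ and the four cross edges themselves.

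The main obstacle is the case analysis in the ``only if'' direction, since the eight endpoints of the four designated edges can interleave along $G_1$ and $G_2$ in many ways and can collapse (edges sharing endpoints). To control this, I exploit the symmetries of the problem: reversing the indexing of $G_2$ swaps $R_G^{\uparrow}$ and $R_G^{\downarrow}$ (and hence swaps the roles of parallel and crossing pairs, which is why (i) and (ii) appear symmetrically in the statement), and swapping $G_1$ with $G_2$ swaps $L_G$ with $R_G$. Using these symmetries I fix canonical positions for $p_1, p_2, q_1, q_2$, and then enumerate the remaining configurations according to how $\{a,c\}$ interacts with $\{e,g\}$ on $G_1$ and $\{b,d\}$ with $\{f,h\}$ on $G_2$. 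In each case, the expected subdivision is built explicitly, which together with Theorem~\ref{thm:outpl} completes the proof.
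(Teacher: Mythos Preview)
Your ``if'' direction is correct and is essentially the paper's argument: placing the vertices on a convex curve in one of the two cyclic orders and checking that the forbidden chord--interleaving is exactly what condition (i) or (ii) rules out is equivalent to the paper's two-parallel-lines embedding.

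Your ``only if'' direction has the right overall strategy (contrapositive, then exhibit a $K_{2,3}$ or $K_4$ subdivision), but as written it is an outline rather than a proof: you never actually carry out the case analysis on the eight endpoints of $p_1,p_2,q_1,q_2$. The paper handles this much more economically by first isolating a key lemma (Lemma~\ref{lem:outerplanar}): if a \emph{single} edge $e$ has both a parallel witness (some $e'\in L_G^{\uparrow}(e)\cap R_G^{\uparrow}(e)$ or $L_G^{\downarrow}(e)\cap R_G^{\downarrow}(e)$) and a crossing witness (some $e''\in L_G^{\uparrow}(e)\cap R_G^{\downarrow}(e)$ or $L_G^{\downarrow}(e)\cap R_G^{\uparrow}(e)$), then the three edges $e,e',e''$ together with sub-paths of $G_1,G_2$ already form a $K_{2,3}$ subdivision. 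This is precisely your ``representative situation,'' but promoted to a stand-alone statement. Once you have it, the remaining work is not a case split on four edges but a short argument on two: take the edge $e$ at which (i) fails and the edge $f$ at which (ii) fails, compare $l(e),l(f)$ and $r(e),r(f)$, and show that in every sub-case one of $e,f$ (or one of their witnesses $a,b$) satisfies the hypothesis of the lemma, except for one degenerate configuration $l(e)=l(f)$, $l(a)=l(b)$, $r(a)=r(f)$, $r(b)=r(e)$, in which the four edges directly span a $K_4$ subdivision.

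So the missing piece in your proposal is not a wrong idea but the organizing lemma: extract the three-edge $K_{2,3}$ construction first, and the four-edge case analysis you anticipate collapses to a handful of easy comparisons.
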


A.\ Chen {\it et al.} \cite{functi} introduced the notion of functigraphs.
Let $G'$ and $G''$ be two copies of a graph $G$ with disjoint vertex sets.
In addition, let $f$ be a function from $V(G')$ to $V(G'')$.
The {\it functigraph} $C(G,f)$ is defined to be the graph with the vertex set
    $V(C(G,f))=V(G') \cup V(G'')$
    and the edge set
    $E(C(G,f))=E(G')\cup E(G'') \cup \{uv \colon\, u\in V(G'),\ v\in V(G''),\ v=f(u) \}$.
In the same paper, they proposed a problem to characterize planar functigraphs $C(P_n,f)$ and Theorem~\ref{thm:planar} solves it
since a functigraph $C(P_n,f)$ is a generalized $(n,n)$-ladder graph.

While proving Theorem~\ref{thm:planar}, we give a specific planar embedding of a given generalized ladder graph if it is planar.
For instance, for a generalized $(15,13)$-ladder graph given in Figure~\ref{fig:intro1}, its planar embedding given in Figure~\ref{fig:intro2} is obtained in the proof of Theorem~\ref{thm:planar}.
%Further, based upon Theorem~\ref{thm:planar}, we present an $O(n)$-time algorithm to check whether or not a given generalized ladder graph of order $n$ with at most $3n-6$ edges is planar (Section~\ref{sec:alg}).

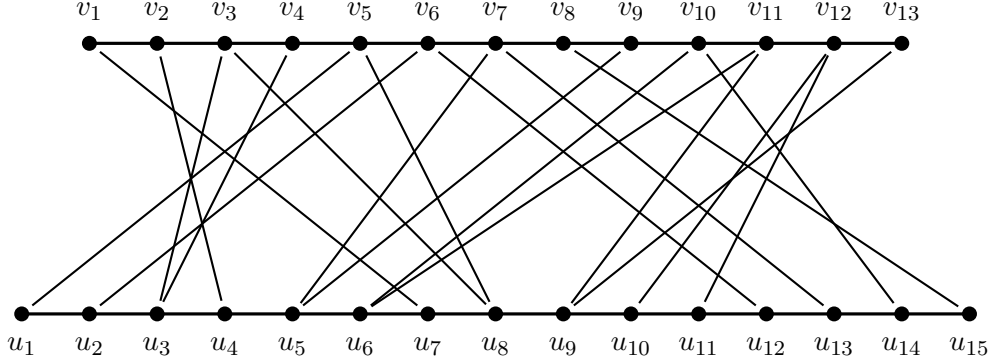
\begin{figure}
        \begin{center}
        \begin{tikzpicture}[scale=0.9]
        \tikzset{mynode/.style={inner sep=2pt,fill,outer sep=2.3pt,circle}}
        \node [mynode] (u1) at (-7,-2) [label=below :$u_{1}$] {};
        \node [mynode] (u2) at (-6,-2) [label=below :$u_{2}$] {};
        \node [mynode] (u3) at (-5,-2) [label=below :$u_{3}$] {};
        \node [mynode] (u4) at (-4,-2) [label=below :$u_{4}$] {};
        \node [mynode] (u5) at (-3,-2) [label=below :$u_{5}$] {};
        \node [mynode] (u6) at (-2,-2) [label=below :$u_{6}$] {};
        \node [mynode] (u7) at (-1,-2) [label=below :$u_{7}$] {};
        \node [mynode] (u8) at (0,-2) [label=below :$u_{8}$] {};
        \node [mynode] (u9) at (1,-2) [label=below :$u_{9}$] {};
        \node [mynode] (u10) at (2,-2) [label=below :$u_{10}$] {};
        \node [mynode] (u11) at (3,-2) [label=below :$u_{11}$] {};
        \node [mynode] (u12) at (4,-2) [label=below :$u_{12}$] {};
        \node [mynode] (u13) at (5,-2) [label=below :$u_{13}$] {};
        \node [mynode] (u14) at (6,-2) [label=below :$u_{14}$] {};
        \node [mynode] (u15) at (7,-2) [label=below :$u_{15}$] {};
        \node [mynode] (v1) at (-6,2) [label=above :$v_{1}$] {};
        \node [mynode] (v2) at (-5,2) [label=above :$v_{2}$] {};
        \node [mynode] (v3) at (-4,2) [label=above :$v_{3}$] {};
        \node [mynode] (v4) at (-3,2) [label=above :$v_{4}$] {};
        \node [mynode] (v5) at (-2,2) [label=above :$v_{5}$] {};
        \node [mynode] (v6) at (-1,2) [label=above :$v_{6}$] {};
        \node [mynode] (v8) at (0,2) [label=above :$v_{7}$] {};
        \node [mynode] (v9) at (1,2) [label=above :$v_{8}$] {};
        \node [mynode] (v10) at (2,2) [label=above :$v_{9}$] {};
        \node [mynode] (v11) at (3,2) [label=above :$v_{10}$] {};
        \node [mynode] (v12) at (4,2) [label=above :$v_{11}$] {};
        \node [mynode] (v14) at (5,2) [label=above :$v_{12}$] {};
        \node [mynode] (v15) at (6,2) [label=above :$v_{13}$] {};
        \draw[-, very thick] (-7,-2) -- (7,-2);
        \draw[-, very thick] (-6,2) -- (6,2);
        \draw[-, thick] (u1) edge (v5);
        \draw[-, thick] (u2) edge (v6);
        \draw[-, thick] (u3) edge (v3);%
        \draw[-, thick] (u3) edge (v4);
        \draw[-, thick] (u4) edge (v2);
        \draw[-, thick] (u5) edge (v10);
        \draw[-, thick] (u5) edge (v8);%
        \draw[-, thick] (u6) edge (v11);
        \draw[-, thick] (u6) edge (v12);%
        \draw[-, thick] (u7) edge (v1);
        \draw[-, thick] (u8) edge (v3);%
        \draw[-, thick] (u8) edge (v5);
        \draw[-, thick] (u9) edge (v12);
        \draw[-, thick] (u9) edge (v15);%
        \draw[-, thick] (u10) edge (v14);
        \draw[-, thick] (u11) edge (v14);
        \draw[-, thick] (u12) edge (v6);
        \draw[-, thick] (u13) edge (v8);
        \draw[-, thick] (u14) edge (v11);
        \draw[-, thick] (u15) edge (v9);
        \end{tikzpicture}
        \end{center}
        \caption{A generalized $(15,13)$-ladder graph $G$}
        \label{fig:intro1}
       \end{figure}

\begin{figure}
        \begin{center}
        \begin{tikzpicture}[scale=1]
        \tikzset{mynode/.style={inner sep=2pt,fill,outer sep=2.3pt,circle}}
        \node [mynode] (u1) at (-3,-3.5) [label=right :$u_{1}$] {};
        \node [mynode] (u2) at (-3,-3) [label=right :$u_{2}$] {};
        \node [mynode] (u3) at (-3,-2.5) [label=right :$u_{3}$] {};
        \node [mynode] (u4) at (-3,-2) [label=right :$u_{4}$] {};
        \node [mynode] (u5) at (-3,-1.5) [label=right :$u_{5}$] {};
        \node [mynode] (u6) at (-3,-1) [label=right :$u_{6}$] {};
        \node [mynode] (u7) at (-3,-0.5) [label=right :$u_{7}$] {};
        \node [mynode] (u8) at (-3,0) [label=right :$u_{8}$] {};
        \node [mynode] (u9) at (-3,0.5) [label=right :$u_{9}$] {};
        \node [mynode] (u10) at (-3,1) [label=right :$u_{10}$] {};
        \node [mynode] (u11) at (-3,1.5) [label=right :$u_{11}$] {};
        \node [mynode] (u12) at (-3,2) [label=right :$u_{12}$] {};
        \node [mynode] (u13) at (-3,2.5) [label=right :$u_{13}$] {};
        \node [mynode] (u14) at (-3,3) [label=right :$u_{14}$] {};
        \node [mynode] (u15) at (-3,3.5) [label=right :$u_{15}$] {};
        \node [mynode] (v1) at (3,3) [label=left :$v_{1}$] {};
        \node [mynode] (v2) at (3,2.4) [label=left :$v_{2}$] {};
        \node [mynode] (v3) at (3,1.8) [label=left :$v_{3}$] {};
        \node [mynode] (v4) at (3,1.2) [label=left :$v_{4}$] {};
        \node [mynode] (v5) at (3,0.6) [label=left :$v_{5}$] {};
        \node [mynode] (v6) at (3,0) [label=left :$v_{6}$] {};
        \node [mynode] (v8) at (3,-0.6) [label=left :$v_{7}$] {};
        \node [mynode] (v9) at (3,-1.2) [label=left :$v_{8}$] {};
        \node [mynode] (v10) at (3,-1.8) [label=left :$v_{9}$] {};
        \node [mynode] (v11) at (3,-2.4) [label=left :$v_{10}$] {};
        \node [mynode] (v12) at (3,-3.0) [label=left :$v_{11}$] {};
        \node [mynode] (v14) at (3,-3.5) [label=left :$v_{12}$] {};
        \node [mynode] (v15) at (3,-4) [label=left :$v_{13}$] {};
        \draw[-, very thick] (-3,-3.5) -- (-3,3.5);
        \draw[-, very thick] (3,-4) -- (3,3);
        \draw[-, thick] (u3) edge (v4);
        \draw[-, thick] (u3) edge (v3);%
        \draw[-, thick] (u4) edge (v2);
        %\draw[dotted, thick] (3,3.5) -- (7,8.5);
        %\draw[dotted, thick] (3,3.5) -- (3,6);
        %\draw[dotted, thick] (-3,3.5) -- (-7,8.5);
        %\draw[dotted, thick] (-3,-4) -- (-5,-6.5);
        %\draw[dotted, thick] (-3,-4) -- (-3,-6.5);
        \draw[solid, thick] (u7)  -- (3,3.7) -- (3.16,3.7) -- (v1);
        \draw[solid, thick] (u8)  -- (3,4.7) -- (3.96,4.7) -- (v5);
        \draw[solid, thick] (u8)  -- (3,4.3) -- (3.64,4.3) -- (v3);%
        \draw[solid, thick] (u12)  -- (3,5.3) -- (4.44,5.3) -- (v6);
        \draw[solid, thick] (u13)  -- (3,5.5) -- (4.6,5.5) -- (v8);
        \draw[solid, thick] (u15)  -- (3,5.8) -- (4.84,5.8) -- (v9);
        \draw[solid, thick] (u1)  -- (-3.16,-4.2) -- (-3,-4.2) -- (v5);
        \draw[solid, thick] (u2)  -- (-3.4,-4.5) -- (-3,-4.5) -- (v6);
        \draw[solid, thick] (u5)  -- (-3.8,-5) -- (-3,-5) -- (v8);%
        \draw[solid, thick] (u5)  -- (-3.96,-5.2) -- (-3,-5.2) -- (v10);
        \draw[solid, thick] (u6)  -- (-4.28,-5.6) -- (-3,-5.6) -- (v11);
        \draw[solid, thick] (u6)  -- (-4.36,-5.7) -- (-3,-5.7) -- (v12);%
        \draw[solid, thick] (u9)  -- (-4.68,-6.1) -- (-3,-6.1) -- (v12);
        \draw[solid, thick] (u9)  -- (-4.84,-6.3) -- (-3,-6.3) -- (v15);%
        \draw[solid, thick] (u14)  -- (-5.4,6.5) -- (5.4,6.5) -- (v11);
        \draw[solid, thick] (u11)  -- (-6.2,7.5) -- (6.2,7.5) -- (v14);
        \draw[solid, thick] (u10)  -- (-6.6,8) -- (6.6,8) -- (v14);

        \end{tikzpicture}
        \end{center}
        \caption{A planar embedding of $G$ given in Figure~\ref{fig:intro1}}
        \label{fig:intro2}
        \end{figure}
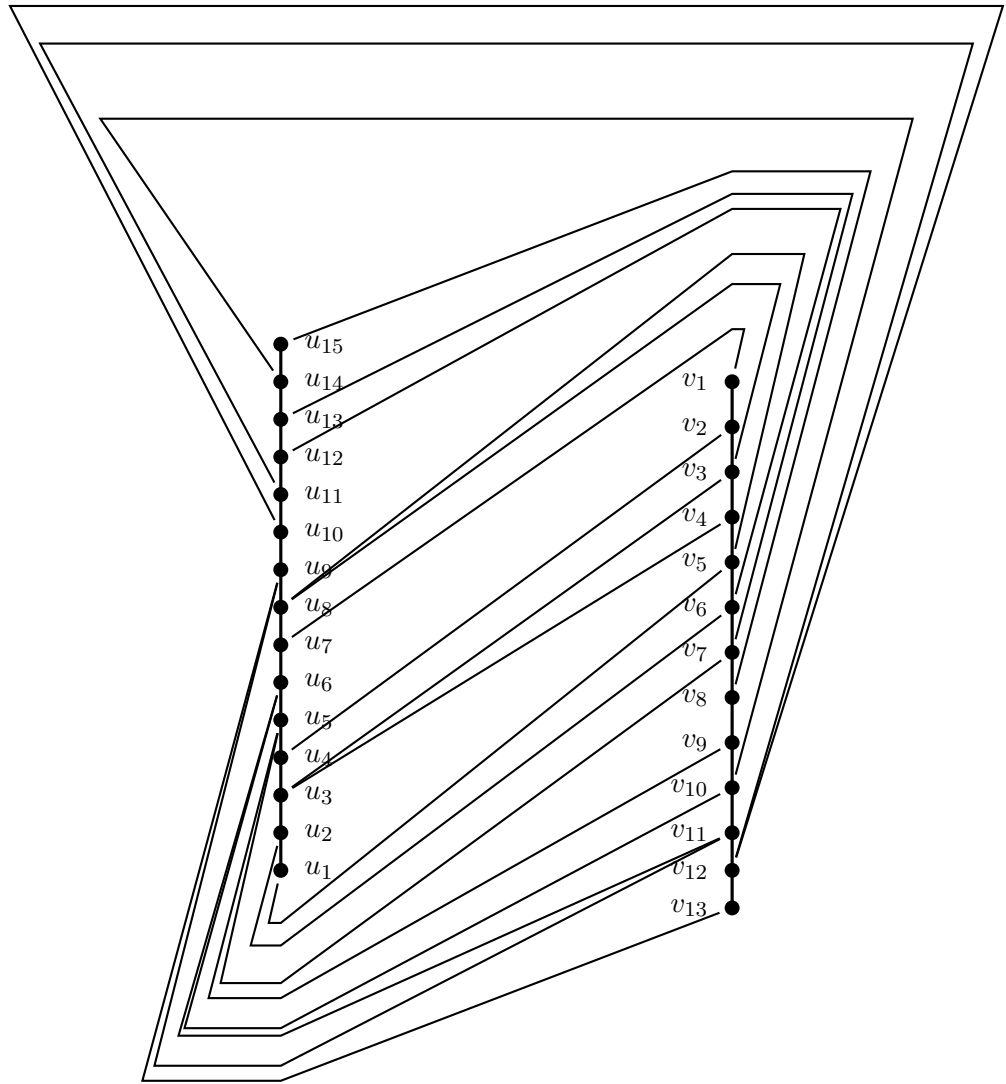

\section{Proof of Theorem~\ref{thm:planar}}

\begin{proof}[Proof of Theorem~\ref{thm:planar}]
$\Rightarrow)$
	We show the contrapositive.
	Let $G$ be a generalized $(m,n)$-ladder graph for some positive integers $m$ and $n$.
	Suppose that there is an edge $a$ in $[G_1,G_2]$ such that
	\[L_G^{\uparrow}(a) \cap R_G^{\downarrow}(a)\neq \emptyset, \quad
L_G^{\uparrow}(a) \cap R_G^{\uparrow}(a)\neq \emptyset, \quad
L_G^{\downarrow}(a) \cap R_G^{\uparrow}(a)\neq \emptyset, \quad \text{and} \quad
L_G^{\downarrow}(a) \cap R_G^{\downarrow}(a)\neq \emptyset. \]
Then there are edges $b$, $c$, $d$, and $e$ in $[G_1,G_2]$ such that
\[b \in L_G^{\uparrow}(a) \cap R_G^{\downarrow}(a), \quad
c\in L_G^{\uparrow}(a) \cap R_G^{\uparrow}(a), \quad
d\in L_G^{\downarrow}(a) \cap R_G^{\uparrow}(a), \quad \text{and} \quad
e\in L_G^{\downarrow}(a) \cap R_G^{\downarrow}(a). \]
That is, \[l(d),l(e)<l(a)<l(b),l(c) \quad \text{and} \quad  r(b),r(e)<r(a)<r(c),r(d). \]
Let \[\alpha=\min\{l(d),l(e)\}; \quad \beta=\max\{l(b),l(c)\}; \quad \gamma=\min\{r(b),r(e)\}; \quad \delta=\max\{r(c),r(d)\};\]
\[\alpha'=\max\{l(d),l(e)\}; \quad \beta'=\min\{l(b),l(c)\}; \quad \gamma'=\max\{r(b),r(e)\}; \quad \delta'=\min\{r(c),r(d)\};\]
(see Figure~\ref{fig:H} for an illustration).
Now we claim that $G$ has a $K_{3,3}$ minor.
Denote the $(u_{\alpha}, u_{\beta})$-section of $G_1$ and the $(v_{\gamma},v_{\delta})$-section of $G_2$ by $Q_1$ and $Q_2$, respectively.
We consider a subgraph $H$ of $G$ induced by the edge set \[E(Q_1) \cup E(Q_2) \cup \{a,b,c,d,e\}. \]
Let $X=\{v_{r(a)}, u_{\alpha'}, u_{\beta'}\}$ and $Y=\{u_{l(a)}, v_{\gamma'}, v_{\delta'}\}$.
We say that a path in $H$ is {\it good} if its end vertices are in $X\cup Y$ and each interior vertex of $H$ is not contained in $X\cup Y$.
Then any two good paths in $H$ are internally vertex-disjoint.
For, in $H$, the vertices in $X \cup Y$ are the only vertices of degree $3$ and the other vertices have degree $2$.
We will show that $H$ is a subdivision of $K_{3,3}$ with bipartition $(X,Y)$.
We note that in $H$, $v_{r(a)}$ and $u_{l(a)}$ are adjacent and there exist a $(u_{\alpha'},u_{l(a)})$-section of $Q_1$, a $(u_{\beta'},u_{l(a)})$-section of $Q_1$, a $(v_{r(a)},v_{\gamma'})$-section of $Q_2$, and a $(v_{r(a)},v_{\delta'})$-section of $Q_2$, each of which is a good path.

We take the edge $f$, the $(v_{\mu},v_{\nu})$-section of $Q_2$, and $(v_{\rho},v_{\sigma})$-section of $Q_2$ where
$f=u_{l(e)} v_{r(e)}$ and $(\mu,\nu,\rho,\sigma)=(r(e),\gamma',r(d),\delta')$ if $l(d) > l(e)$;
$f=u_{l(d)} v_{r(d)}$ and $(\mu,\nu,\rho,\sigma)=(r(d),\delta',r(e),\gamma')$ if $l(d) < l(e)$.
Now the $(u_{\alpha'},u_{\alpha})$-section of $Q_1$, the edge $f$, and $(v_{\mu},v_{\nu})$-section of $Q_2$ form a good $(u_{\alpha'},v_{\nu})$-path.
Moreover, the edge $u_{\alpha'}v_{\rho}$ and the $(v_{\rho},v_{\sigma})$-section of $Q_2$ form a good $(u_{\alpha'},v_{\sigma})$-path.
Thus we may obtain a good $(u_{\alpha'},v_{\gamma'})$-path and a good $(u_{\alpha'},v_{\delta'})$-path.

%{\it Case 1.} $\alpha=l(d)$.
%Then $\alpha'=l(e)$.
%Now the $(u_{l(e)},u_{l(d)})$-section of $Q_1$, the edge $u_{l(d)} v_{r(d)}$, and the $(v_{r(d)}, v_{\gamma'})$-section of $Q_2$ form a good $(u_{\alpha'},v_{\gamma'})$-path.
%Moreover, the edge $u_{l(e)}v_{r(e)}$ and the $(v_{r(e)}, v_{\delta'})$-section of $Q_2$ form a good $(u_{\alpha'},v_{\delta'})$-path.

%{\it Case 2.}
%$\alpha=l(e)$.
%Then $\alpha'=l(d)$.
%Now the $(u_{l(d)},u_{l(e)})$-section of $Q_1$, the edge $u_{l(e)} v_{r(e)}$, and the $(v_{r(e)}, v_{\delta'})$-section of $Q_2$ form a good $(u_{\alpha'},v_{\delta'})$ path.
%Moreover, the edge $u_{l(d)}v_{r(d)}$ and the $(v_{r(d)}, v_{\gamma'})$-section of $Q_2$ form a good $(u_{\alpha'},v_{\gamma'})$ path.

By symmetry, there exist a good $(u_{\beta'}, v_{\gamma'})$-path and a good $(u_{\beta'}, v_{\delta'})$-path in $H$ whether $\beta'=l(b)$ or $\beta'=l(c)$.
Consequently, we have shown that $H$ is a subdivision of $K_{3,3}$ with bipartition $(X,Y)$.
Hence, by Theorem~\ref{thm:kur}, $G$ is not planar.

  \begin{figure}
        \begin{center}
        \begin{tikzpicture}[scale=0.7]
        \tikzset{mynode/.style={inner sep=1.3pt,fill,outer sep=-1.5pt,circle}}
        \node [mynode] (alpha) at (-2,-3.5) [label=left :$u_{\alpha}$] {};
        \node [mynode] (alpha') at (-2,-1.5) [label=left :$u_{\alpha'}$] {};
        \node [mynode] (la) at (-2,0) [label=left :$u_{l(a)}$] {};
        \node [mynode] (beta') at (-2,2) [label=left :$u_{\beta'}$] {};
        \node [mynode] (beta) at (-2,4) [label=left :$u_{\beta}$] {};
        \node [mynode] (gamma) at (2,-4) [label=right :$v_{\gamma}$] {};
        \node [mynode] (gamma') at (2,-2) [label=right :$v_{\gamma'}$] {};
        \node [mynode] (ra) at (2,0) [label=right :$v_{r(a)}$] {};
        \node [mynode] (delta') at (2,2.5) [label=right :$v_{\delta'}$] {};
        \node [mynode] (delta) at (2,3.5) [label=right :$v_{\delta}$] {};
        \node[] at (-4,0) (g1) {$Q_1$};
        \node[] at (4,0) (g2) {$Q_2$};
        \draw[solid, thick] (-2,-3.5) -- (-2,4);
        \draw[solid, thick] (2,-4) -- (2,3.5);
        \draw[solid, thick] (la) -- (ra);
        \draw[solid, thick] (alpha) -- (-1.5,-3.5);
        \draw[solid, thick] (alpha') -- (-1.5,-1.7);
        \draw[solid, thick] (beta') -- (-1.5,2.3);
        \draw[solid, thick] (beta) -- (-1.5,3.7);
        \draw[solid, thick] (gamma) -- (1.5,-4);
        \draw[solid, thick] (gamma') -- (1.5,-2.3);
        \draw[solid, thick] (delta') -- (1.5,1.5);
        \draw[solid, thick] (delta) -- (1.5,3.7);
		\end{tikzpicture}
        \end{center}
        \caption{The subgraph $H$ in Theorem~\ref{thm:planar}}
        \label{fig:H}
        \end{figure}
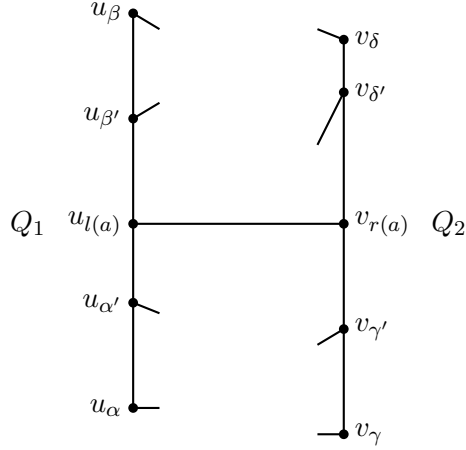

$\Leftarrow)$
Let $G$ be a generalized $(m,n)$-ladder graph for some positive integers $m$ and $n$.
Suppose that for each edge $e \in [G_1,G_2]$, at least one of the following edge sets is empty:
\[ L_G^{\uparrow}(e) \cap R_G^{\downarrow}(e); \quad
L_G^{\uparrow}(e) \cap  R_G^{\uparrow}(e); \quad
L_G^{\downarrow}(e) \cap R_G^{\uparrow}(e); \quad
L_G^{\downarrow}(e) \cap R_G^{\downarrow}(e).\]
Let
\begin{equation}
\begin{aligned}
A=\{e \in [G_1,G_2] \colon\, L_G^{\uparrow}(e) \cap R_G^{\downarrow}(e)=\emptyset\}; &\quad
	B=\{e \in [G_1,G_2] \colon\, L_G^{\uparrow}(e) \cap  R_G^{\uparrow}(e)=\emptyset\}; \\
	C=\{e \in [G_1,G_2] \colon\, L_G^{\downarrow}(e) \cap R_G^{\uparrow}(e)=\emptyset\}; &\quad
	D=\{e \in [G_1,G_2] \colon\, L_G^{\downarrow}(e) \cap R_G^{\downarrow}(e)=\emptyset\}.
\end{aligned}
\label{a1a2b1b2}
\end{equation}
By the assumption, $[G_1,G_2] = A \cup B \cup C \cup D$, not necessarily a disjoint union.
Set
\[X=A, \quad Y=B-A, \quad Z=C-(A \cup B), \quad \text{and} \quad W=D -(A \cup B \cup C). \]
Then $[G_1,G_2]$ is a disjoint union of the sets $X$, $Y$, $Z$, and $W$.

    Now, we embed the graph $G$ in the plane as follows: (see Figure~\ref{fig:planar embedding} for an illustration)
	
	First, we plot the vertices of $G$ in the following way:
 	For each $i \in [m]$ and each $j \in [n]$, locate $u_i$ at $(0,ni)$ and locate $v_j$ at $(mn,m(n-j))$.
 	Thus for each $e\in [G_1,G_2]$,
 	\begin{equation}\label{eq:u,v}
 		u_{l(e)}\mapsto (0,n\cdot l(e)) \quad \text{and} \quad v_{r(e)}\mapsto (mn,mn-m \cdot r(e)).
 	\end{equation}
 	Then we draw a straight line from $(0,n)$ to $(0, mn)$ which represents $G_1$ and draw a straight line from $(mn,0)$ to $(mn,mn-m)$ which represents $G_2$.
 	
	Each edge in $[G_1,G_2]$ will be drawn as a line which is composed of line segments whose ends points belong to \[V(G_1)\cup V(G_2) \cup X^1 \cup X^2 \cup Y^1 \cup Y^2 \cup Z^1 \cup Z^2\]
	where
	\begin{align*}
 		X^1 &= \{(x,y)\in \mathbb{Z} \times \mathbb{Z} \colon\, x=mn,\ mn < y \le 2mn\}; \\
 		X^2 &= \{(x,y)\in \mathbb{Z} \times \mathbb{Z} \colon\, y=x,\ mn < y \le 2mn\}; \\
 		Y^1 &= \{(x,y)\in \mathbb{Z} \times \mathbb{Z} \colon\, y=-x+mn,\ 2mn< y \le 3mn\}; \\
 		Y^2 &= \{(x,y)\in \mathbb{Z} \times \mathbb{Z} \colon\, y=x,\ 2mn < y \le 3mn\}; \\
 		Z^1 &= \{(x,y)\in \mathbb{Z} \times \mathbb{Z} \colon\, y=x,\ -mn \le y <0\}; \\
 		Z^2 &= \{(x,y)\in \mathbb{Z} \times \mathbb{Z} \colon\, x=0,\ -mn \le y <0\},
 	\end{align*}
	that is,
	\begin{equation}\label{eq:points}
		\begin{aligned}
		X^1&=\{\left(mn, mn-n+in+j\right) \colon\, (i,j)\in [m] \times [n] \}; \\
		X^2&=\{\left(mn-n+in+j, mn-n+in+j\right) \colon\, (i,j)\in [m] \times [n]\}; \\
		Y^1&=\{\left(-2mn+in-j, 3mn-in+j\right) \colon\, (i,j)\in [m] \times [n]\}; \\
		Y^2&=\{\left(3mn-in+j, 3mn-in+j\right) \colon\, (i,j)\in [m] \times [n]\}; \\
		Z^1&=\{\left(n-in-j, n-in-j \right) \colon\, (i,j)\in [m] \times [n]\};\\
		Z^2&=\{\left(0, n-in-j\right) \colon\, (i,j)\in [m] \times [n]\}.
	\end{aligned}
	\end{equation}
 	
	For $k=1,2$, we denote by $x^k_{i,j}$, $y^k_{i,j}$, and $z^k_{i,j}$ the point in $X^k$, $Y^k$, and $Z^k$, respectively, corresponding to $(i,j)\in [m] \times [n]$.
	We simply write $x^k_{l(e),r(e)}$, $y^k_{l(e),r(e)}$, and $z^k_{l(e),r(e)}$ as $x^k_e$, $y^k_e$, and $z^k_e$, respectively, for each $e \in [G_1,G_2]$.
	By \eqref{eq:points},
	\begin{equation}\label{eq:x^}
		x^1_e = (mn,mn-n+n\cdot l(e)+r(e)), \quad x^2_e = (mn-n+n \cdot l(e)+r(e),mn-n+n\cdot l(e)+r(e));
	\end{equation}
	\begin{equation}\label{eq:y^}
	\begin{aligned}
		y^1_e &= (-2mn+n\cdot l(e)-r(e), 3mn-n\cdot l(e)+r(e)),\\
		y^2_e &= (3mn-n \cdot l(e)+r(e), 3mn-n \cdot l(e)+r(e));
	\end{aligned}
	\end{equation}
	\begin{equation}\label{eq:w^}
		z^1_e = (n-n \cdot l(e)-r(e), n-n \cdot l(e)-r(e)), \quad  z^2_e = (0,n-n \cdot l(e)-r(e)),
	\end{equation}
	for each $e \in [G_1,G_2]$.
	
Given a line $L$ and two points $P$, $Q$ on $L$, we mark the segment $S$ of $L$ between $P$ and $Q$ by
\[P\stackrel{S}{\text{------}}Q. \]
Now we are ready to describe the line representing $e$ in $[G_1,G_2]$ precisely.
\begin{itemize}
	\item We represent each $e$ in $X$ as a line consisting of line segments:
    \[u_{l(e)}\stackrel{\Lambda_{x,1}(e)}{\text{------}}x^1_e\stackrel{\Lambda_{x,2}(e)}{\text{------}}x^2_e\stackrel{\Lambda_{x,3}(e)}{\text{------}}v_{r(e)}.\]
    \item We represent each $e$ in $Y$ as a line consisting of line segments:
    \[u_{l(e)}\stackrel{\Lambda_{y,1}(e)}{\text{------}}y^1_e\stackrel{\Lambda_{y,2}(e)}{\text{------}}y^2_e\stackrel{\Lambda_{y,3}(e)}{\text{------}}v_{r(e)}.\]
    \item We represent each $e$ in $Z$ as a line consisting of line segments: \[u_{l(e)}\stackrel{\Lambda_{z,1}(e)}{\text{------}}z^1_e\stackrel{\Lambda_{z,2}(e)}{\text{------}}z^2_e\stackrel{\Lambda_{z,3}(e)}{\text{------}}v_{r(e)}.\]
    \item We represent each $e$ in $W$ as a straight line $\Lambda_{w}(e)$ from $u_{l(e)}$ to $v_{r(e)}$.
\end{itemize}
    We will show that there is no crossing in the above drawing.
    To show that there is no crossing in the above drawing, it suffices to check whether some two edges in $[G_1,G_2]$ cross or not.

   Take two distinct edges $e$ and $e'$ in $X$.
   	By $\eqref{eq:x^}$, we may check the following:
   	\begin{equation*}\label{eq:equiv1}
   		\begin{aligned}
   	x^1_e \text{ lies above } x^1_{e'}  &\iff mn+n(l(e)-1)+r(e) > mn+n(l(e')-1)+r(e')\\
   	&\iff \text{either } l(e)> l(e') \text{ or } l(e)=l(e') \text{ and } r(e) > r(e')     \\
   	&\iff x^2_e \text{ lies above } x^2_{e'}.
   		\end{aligned}
   	\end{equation*}
   	Further, since $r(e) \ge r(e')$ if $l(e)>l(e')$ by the definition of $X$, the second equivalence can be restated
   	\begin{equation}\label{eq:equiv2}
   	e\neq e',\ l(e)\ge l(e'), \text{ and } r(e) \ge r(e') \iff
   		x^1_e \text{ lies above } x^1_{e'}
   		\iff x^2_e \text{ lies above } x^2_{e'}.
   	\end{equation}
   	We note that for each edge $a$ in $X$,
   	\begin{align*}
   		(x,y) \in \Lambda_{x,1}(a)-\{x^1_a \} &\implies \quad 0\le x< mn; \\
   		(x,y) \in \Lambda_{x,2}(a)-\{x^2_a \} &\implies x\ge mn \text{ and } y> x; \\
   		(x,y) \in \Lambda_{x,3}(a)\quad  &\implies x\ge mn \text{ and } y\le x.
   	\end{align*}
   	Then noting that $\{0\le x< mn\}$, $\{x\ge mn, y> x\}$, and $\{x\ge mn, y\le x\}$ are mutually disjoint.
   	It is easy to check that there is no crossing for $e$ and $e'$ if and only if $\Lambda_{x,i}(e)$ and $\Lambda_{x,i}(e')$ do not cross for each $i=1,2,3$.
	One may check that $x^1_e\neq x^1_{e'}$ for $e \neq e'$.
	Without loss of generality, we may assume that $x^1_e$ lies above $x^1_{e'}$.
	Then, by \eqref{eq:equiv2},	\begin{itemize}
		\item there is no crossing for $\Lambda_{x,1}(e)$ and $\Lambda_{x,1}(e')$ since $u_{l(e')}$ does not lie above $u_{l(e)}$;
	\item there is no crossing for $\Lambda_{x,2}(e)$ and $\Lambda_{x,2}(e')$ since $x^2_e$ lies above $x^2_{e'}$.
	\end{itemize}
	Further, there is no crossing for $\Lambda_{x,3}(e)$ and $\Lambda_{x,3}(e')$.
	To see why, we consider the triangle $T$ with vertices $v_{r(e)}$, $x^2_{e}$, and $(mn,mn)$.
	Then, by \eqref{eq:equiv2}, $r(e)\ge r(e')$ and so $v_{r(e)}$ does not lie above $v_{r(e')}$ on the line $x=mn$.
	Thus $v_{r(e')}$ lies on the side of $T$ joining $v_{r(e)}$ and $(mn,mn)$.
	Moreover, $x^2_{e'}$ lies on the side of $T$ joining $x^2_{e}$ and $(mn,mn)$ since $x^2_{e'}$ lies below $x^2_{e}$ on the line $y=x$.	
	Thus $\Lambda_{x,3}(e)$ is a segment inside $T$ and so  there is no crossing for $e$ and $e'$.
	Therefore we may conclude that the edges in $X$ do not pairwise cross in the above drawing.
    By a similar argument, one can show that the same is true for each of $Y$, $Z$, and $W$.

    Now, we consider the following regions:
    \begin{align*}
    	\mathcal{R}_1&=\{(x,y) \colon\, x \ge mn \}\cup \{(x,y)\colon\, 0 \le x\le mn,\ y \ge \frac{mn-n}{mn}x+n\};
    	\\
    	\mathcal{R}_2&=\{(x,y) \colon\, x \le 0 \}\cup \{(x,y)\colon\, 0 \le x\le mn,\ y < \frac{mn-n}{mn}x+n\}.
    \end{align*}
    Take two edges $e\in X$ and $e' \in Z$.
    Then we may check that $e$ and $e'$ are included in $\mathcal{R}_1$ and $\mathcal{R}_2$, respectively.
    Since $\mathcal{R}_1 \cap \mathcal{R}_2 \subseteq \{(x,y) \colon\, x=0 \text{ or } mn\}$,
    $u_{l(e)}=u_{l(e')}$ or $v_{r(e)}=v_{r(e')}$ if $e$ and $e'$ cross.
    Therefore no edge in $X$ and no edge in $Z$ cross.
	One can show that there is no crossing for an edge in $Y$ and an edge in $W$ by a similar argument applied to the regions
	\[\{(x,y) \colon\, x \le 0 \text { or } x \ge mn \}\cup \{(x,y)\colon\, 0 \le x\le mn,\ y> mn\} \text{ and } \{(x,y) \colon\, 0\le x \le mn, y\le mn \}.\]

	Now, we consider the following regions:
    \begin{align*}
    	\mathcal{R}_3=\{(x,y) \colon\, x \ge 0 \}\cap \{(x,y)\colon\, y \le 2mn\}; \quad \mathcal{R}_4=\{(x,y) \colon\, x \le 0 \}\cup \{(x,y)\colon\, y > 2mn\}.
    \end{align*}
    Take two edges $e \in X$ and $e' \in Y$.
    Then we may check that $e$ is included in $\mathcal{R}_3$ and $\Lambda_{y,1}(e')$ and $\Lambda_{y,2}(e')$ are included in $\mathcal{R}_4$.
    Since $\mathcal{R}_3 \cap \mathcal{R}_4\subseteq \{(x,y) \colon\, x=0\}$, $e$ and $\Lambda_{y,2}(e')$ do not cross.
    Moreover, $u_{l(e)}=u_{l(e')}$ if $e$ and $\Lambda_{y,1}(e')$ cross.
    Therefore we only need to guarantee that $e$ and $\Lambda_{y,3}(e')$ do not cross.
    Since each of $u_{l(e)}$, $x^1_e$, and $x^2_e$ lies on $\{(x,y) \colon\, y\ge x \}$ by \eqref{eq:u,v} and \eqref{eq:x^}, $\Lambda_{x,1}(e)$ and $\Lambda_{x,2}(e)$ are included in $\{(x,y) \colon\, y\ge x \}$.
    On the other hand, since $y^2_{e'}$ and $v_{r(e')}$ lie on $\{(x,y) \colon\, y\le x \}$ by \eqref{eq:u,v} and \eqref{eq:y^}, $\Lambda_{y,3}(e')$ except $y^2_{e'}$ is included in $\{(x,y) \colon\, y< x \}$.
    Thus $\Lambda_{x,1}(e)$ and $\Lambda_{x,2}(e)$ do not cross with $\Lambda_{y,3}(e')$.
    To show that $\Lambda_{x,3}(e)$ and $\Lambda_{y,3}(e')$ do not cross, we claim $r(e)\le r(e')$.
    By the definitions of $X$ and $Y$, $L_G^{\uparrow}(e) \cap R_G^{\downarrow}(e)=\emptyset$ and $L_G^{\uparrow}(e') \cap R_G^{\uparrow}(e')=\emptyset$.
    If $l(e)<l(e')$, then $e' \in L_G^{\uparrow}(e)$ and so $e' \notin R_G^{\downarrow}(e)$ which implies $r(e) \le r(e')$.
    If $l(e)>l(e')$, then $e \in L_G^{\uparrow}(e')$ and so $e \notin R_G^{\uparrow}(e')$ which implies $r(e) \le r(e')$.
	In the case where $l(e)=l(e')$ and $r(e)>r(e')$,
    \[L_G^{\uparrow}(e') \cap R_G^{\downarrow}(e') =L_G^{\uparrow}(e) \cap  R_G^{\downarrow}(e') \subseteq L_G^{\uparrow}(e) \cap  R_G^{\downarrow}(e) = \emptyset\]
	and so $e' \in A=X$, which contradicts that $X$ and $Y$ are disjoint.
	Thus, if $l(e)=l(e')$, then $r(e)\le r(e')$.
	Consequently, we have shown that
	 $v_{r(e)}$ does not lie below $v_{r(e')}$.
    We also note that $y^2_{e'}$ lies above $x^2_{e}$ on the line $y=x$.
    Then, by just replacing $\Lambda_{x,3}(e')$ with $\Lambda_{y,3}(e')$ in the argument applied earlier for showing that there is no crossing of $\Lambda_{x,3}(e)$ and $\Lambda_{x,3}(e')$, we have an argument for showing that $\Lambda_{x,3}(e)$ and $\Lambda_{y,3}(e')$ do not cross.
    Therefore we may conclude that there is no crossing for an edge in $X$ and an edge in $Y$.
	By a similar argument, one may check that for edges $e \in Y$ and $e' \in Z$, $l(e) \ge l(e')$ and so
    $e$ and $e'$ do not cross.
    Thus there is no crossing for an edge in $Y$ and an edge in $Z$.

	Take two edges $e \in X$ and $e' \in W$.
	Then $L_G^{\uparrow}(e) \cap R_G^{\downarrow}(e)=\emptyset$ and $L_G^{\downarrow}(e') \cap R_G^{\downarrow}(e')=\emptyset$.
	Since each of $x^1_e$, $x^2_e$, and $v_{r(e)}$ lies on $\{(x,y) \colon\, mn\ge x \}$ by \eqref{eq:u,v} and \eqref{eq:x^}, $\Lambda_{x,2}(e)$ and $\Lambda_{x,3}(e)$ are included in $\{(x,y) \colon\, mn\ge x \}$.
    On the other hand, $\Lambda_{w}(e')$ is included in $\{(x,y) \colon\, 0\le x\le mn \}$.
    Thus $\Lambda_{x,2}(e)$ and $\Lambda_{x,3}(e)$ intersect with $\Lambda_{w}(e')$ only at $v_{r(e)}=v_{r(e')}$.
	To show that  $\Lambda_{x,1}(e)$ and $\Lambda_{w}(e')$ do not cross, we claim $l(e) \ge l(e')$.
	To the contrary, suppose $l(e)<l(e')$.
	Then $e \in L_G^{\downarrow}(e')$ and $e' \in L_G^{\uparrow}(e)$.
	Since $L_G^{\downarrow}(e') \cap R_G^{\downarrow}(e')=\emptyset$
    and $L_G^{\uparrow}(e) \cap R_G^{\downarrow}(e)=\emptyset$,
    $e \notin R_G^{\downarrow}(e)$ and $e' \notin R_G^{\downarrow}(e)$.
    Then $r(e) \ge r(e')$ and $r(e') \ge r(e)$.
    Thus $r(e)=r(e')$.
    By the assumption $l(e) < l(e')$, we have
    \[L_G^{\uparrow}(e') \cap R_G^{\downarrow}(e')=L_G^{\uparrow}(e') \cap R_G^{\downarrow}(e)\subseteq L_G^{\uparrow}(e) \cap R_G^{\downarrow}(e)=\emptyset \]
    and so $e' \in A=X$, which contradicts that $X$ and $W$ are disjoint.
    Therefore $u_{l(e')}$ does not lie lies above $u_{l(e)}$ on the line $x=0$.
	Since $x^1_e$ lies above $v_{r(e')}$ on the line $x=mn$, $\Lambda_{x,1}(e)$ and $\Lambda_{w}(e')$ do not cross.
	Thus $e$ and $e'$ do not cross.
	Therefore no edge in $X$ and no edge in $W$ cross.
	By a similar argument, one may check that for edges $e\in Z$ and an edge in $e' \in W$, $r(e)\ge r(e')$ and so $e$ and $e'$ do not cross.
	Thus there is no crossing for an edge in $Z$ and an edge in $W$.
	Hence we may conclude that the above drawing is a planar embedding of $G$.
\end{proof}

        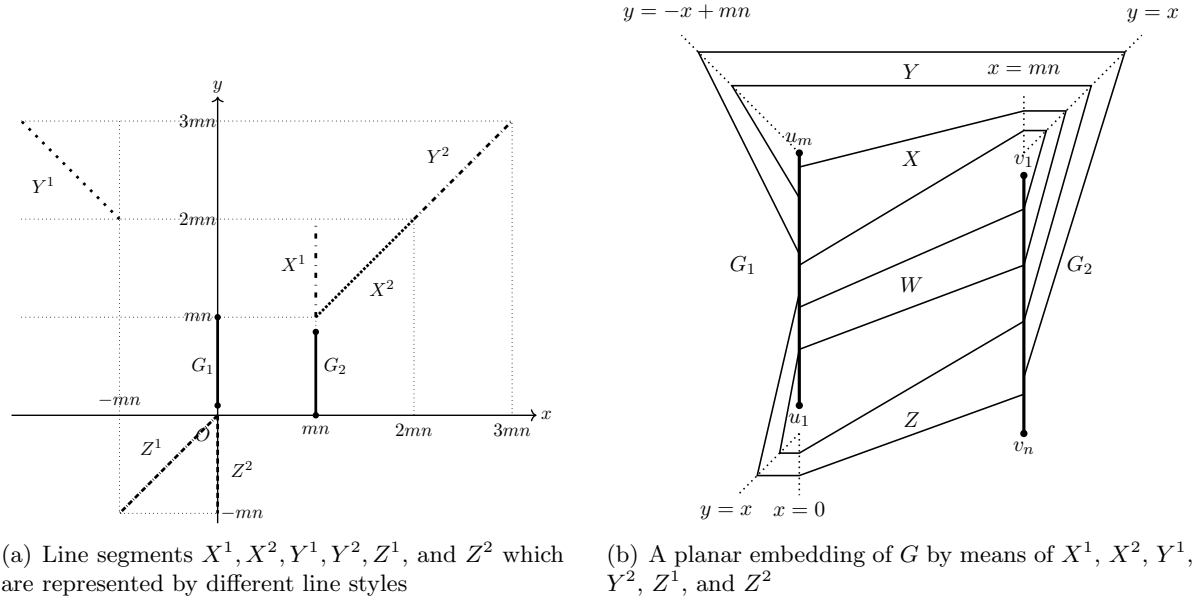
\begin{figure}
        \begin{center}
        \subfigure[Line segments $X^1, X^2, Y^1, Y^2, Z^1$, and $Z^2$ which are represented by different line styles]{
		\resizebox{0.46\textwidth}{!}{%
		\begin{tikzpicture}[scale=1]
        \tikzset{mynode/.style={inner sep=1.3pt,fill,outer sep=-1.5pt,circle}}
        \draw[->, thick] (-4.2,0) -- (6.5,0) node at (6.7,0) {$x$};
        \draw[->, thick] (0,-2.2) -- (0,6.5) node at (0,6.7) {$y$};
        \node[] at (-0.3,-0.4) {$O$};
        \node[mynode] at (0,0.2) {};
        \node[mynode] at (0,2) {};
        \node[mynode] at (2,0) {};
        \node[mynode] at (2,1.7) {};
		\draw[-, ultra thick] (0,0.2) -- (0,2);		
		\draw[-, ultra thick] (2,0) -- (2,1.7);
		\node[] at (-0.3,1) {$G_1$};		
		\node[] at (2.4,1) {$G_2$};		
		\node[] at (0.5,-2) {$-mn$};			
		\node[] at (-0.4,2) {$mn$};			
		\node[] at (-0.4,4) {$2mn$};		
		\node[] at (-0.4,6) {$3mn$};
		\node[] at (-2,0.3) {$-mn$};			
		\node[] at (2,-0.3) {$mn$};		
		\node[] at (4,-0.3) {$2mn$};
		\node[] at (6,-0.3) {$3mn$};
        \draw[dotted] (-4,2) -- (2,2) -- (2,0);
        \draw[dotted] (-4,4) -- (4,4) -- (4,0);
        \draw[dotted] (-4,6) -- (6,6) -- (6,0);
        \draw[dotted] (-2,6)-- (-2,0)  -- (-2,-2) -- (0,-2);
        %\draw[ thick, rotate=-90] (1,0) ellipse (1 and 0.2);
        %\draw[ thick, shift={(2,2)}, rotate=90] (1,0) ellipse (1 and 0.2);
		%\draw[ thick, rotate=45] (4.23,0) ellipse (1.4 and 0.3);	
		%\draw[ thick, rotate=45] (7.07,0) ellipse (1.4 and 0.3);
		%\draw[ thick, shift={(2,0)}, rotate=135] (4.25,0) ellipse (1.4 and 0.3);
		%\draw[ thick, rotate=225] (1.43,0) ellipse (1.4 and 0.3);
		\draw[loosely dashdotted, ultra thick] (2,2) -- (2,4);		
		\draw[densely dotted, ultra thick] (2,2) -- (4,4);		
		\draw[dashdotted, ultra thick] (4,4) -- (6,6);		
		\draw[loosely dotted, ultra thick] (-2,4) -- (-4,6);
		\draw[densely dashdotted, ultra thick] (0,0) -- (-2,-2);		
		\draw[densely dashed, ultra thick] (0,0) -- (0,-2);
		\node[] at (1.55,3.1) {$X^1$};		
		\node[] at (3.35,2.6) {$X^2$};
		\node[] at (-3.55,4.7) {$Y^1$};
		\node[] at (4.5,5.3) {$Y^2$};
		\node[] at (-1.35,-0.65) {$Z^1$};
		\node[] at (0.5,-1.1) {$Z^2$};
        \end{tikzpicture}
        }}
        \hspace{0.3cm}
        \subfigure[A planar embedding of $G$ by means of $X^1$, $X^2$, $Y^1$, $Y^2$, $Z^1$, and $Z^2$]{
        \resizebox{0.48\textwidth}{!}{%
        \begin{tikzpicture}[scale=1]
        \tikzset{mynode/.style={inner sep=1.3pt,fill,outer sep=-1.5pt,circle}}
        \draw[-, ultra thick] (-2,2) edge (-2,-2.5);
        \draw[-, ultra thick] (2,1.6) edge (2,-3);
        \node [mynode] (u1) at (-2,2) [label=above :$u_{m}$] {};
        \node [mynode] (un) at (-2,-2.5) [label=below :$u_1$] {};
        \node [mynode] (v1) at (2,1.6) [label=above :$v_{1}$] {};
        \node [mynode] (vn) at (2,-3) [label=below :$v_{n}$] {};
        \node[] at (-3,0) (g1) {$G_1$};
        \node[] at (3,0) (g2) {$G_2$};
        \node[] at (-0,-0.35) (a1) {$W$};
        \node[] at (-0,-2.75) (b2) {$Z$};
        \node[] at (-0,1.9)(b1) {$X$};
        \node[] at (-0,3.45) (a2) {$Y$};
        \node[] at (2,3.5) (x1) {$x=mn$};
        \node[] at (4.3,4.45) (x2) {$y=x$};
        \node[] at (-2,-4.35) (y1) {$x=0$};
        \node[] at (-4,4.5) (x2) {$y=-x+mn$};
        \node[] at (-3.3,-4.4) (x2) {$y=x$};
        \draw[dotted, thick] (-2,2) -- (-4.1,4.1);
        \draw[dotted, thick] (2,2) -- (4.1,4.1);
        \draw[dotted, thick] (2,2) -- (2,3.05);
        \draw[dotted, thick] (-2,-3) -- (-3.1,-4.1);
        \draw[dotted, thick] (-2,-3) -- (-2,-4.1);
        \draw[solid, thick] (-2,1.75)  -- (2,2.75) -- (2.75,2.75) -- (2,0);
        \draw[solid, thick] (-2,0)  -- (2,2.4) -- (2.4,2.4) -- (2,1);
        \draw[solid, thick] (-2,1.2)  -- (-3.2,3.2) -- (3.2,3.2) -- (2,-1);
        \draw[solid, thick] (-2,0.2)  -- (-3.8,3.8) -- (3.8,3.8) -- (2,-2);
        \draw[solid, thick] (-2,-0.75)  -- (2,1);
        \draw[solid, thick] (-2,-1.5)  -- (2,0);
        \draw[solid, thick] (-2,-0.5)  -- (-2.75,-3.75) -- (-2,-3.75) -- (2,-2.3);
        \draw[solid, thick] (-2,-1.5)  -- (-2.35,-3.35) -- (-2,-3.35) -- (2,-1);
        \end{tikzpicture}
        }}		
        \end{center}
        \caption{A planar embedding of $G$ in the ``if" part of Theorem~\ref{thm:planar}}
        \label{fig:planar embedding}
        \end{figure}

\section{Proof of Theorem~\ref{thm:outerplanar}}
Given a generalized $(m,n)$-ladder graph $G$ with paths $G_1=u_1 \cdots u_m$ and $G_2=v_1 \cdots v_n$,
we consider the following graphs:
\begin{itemize}
	\item the graph $G^L$ with $V(G^L)=V(G)$ and
	\[E(G^L)=E(G_1)\cup E(G_2)\cup \{u_iv_j \colon\, u_{m-i+1}v_j  \in [G_1,G_2] \};\]
	\item the graph $G^R$ with $V(G^R)=V(G)$ and
	\[E(G^R)=E(G_1)\cup E(G_2)\cup \{u_iv_j \colon\, u_{i}v_{n-j+1}  \in [G_1,G_2] \};\]
	\item the graph $G^S$ with $V(G^S)=\{u_i, v_j \colon\, 1\le i \le n, 1\le j \le m\}$ and
	\[E(G^S)=\{u_iu_{i+1} \colon\, 1\le i \le n-1 \}\cup \{v_jv_{j+1} \colon\, 1\le j \le m-1\} \cup \{u_iv_j \colon\, u_jv_i  \in [G_1,G_2] \}.\]
\end{itemize}
It is obvious that $G^L$, $G^R$, and $G^S$ are generalized ladder graphs isomorphic to $G$ such that
\begin{itemize}
	\item a generalized $(m,n)$-ladder graph $G^L$ with paths $G^L_1=u_1 \cdots u_m$ and $G^L_2=v_1 \cdots v_n$;
	\item a generalized $(m,n)$-ladder graph $G^R$ with paths $G^R_1=u_1 \cdots u_m$ and $G^R_2=v_1 \cdots v_n$;
	\item a generalized $(n,m)$-ladder graph $G^S$ with paths $G^S_1=u_1 \cdots u_n$ and $G^S_2=v_1 \cdots v_m$.
\end{itemize}
Accordingly, given a statement $\sigma$ regarding a generalized ladder graph $G$, $\sigma$ is equivalent to each of the following statements:
\begin{itemize}
	\item[(L)] the statement obtained from $\sigma$ by replacing the sets $L^\uparrow_G$ and $L^\downarrow_G$ with the sets $L^\downarrow_G$ and $L^\uparrow_G$, respectively;	
	\item[(R)] the statement obtained from $\sigma$ by replacing the sets $R^\uparrow_G$ and $R^\downarrow_G$ with the sets $R^\downarrow_G$ and $R^\uparrow_G$, respectively;
	\item[(S)] the statement obtained from $\sigma$ by replacing the sets $L^\uparrow_G$, $L^\downarrow_G$, $R^\uparrow_G$, and $R^\downarrow_G$ with the sets $R^\uparrow_G$, $R^\downarrow_G$, $L^\uparrow_G$, and $L^\downarrow_G$, respectively.
\end{itemize}

\begin{lemma}\label{lem:outerplanar}
Let $G$ be a generalized ladder graph.
Then $G$ is not outerplanar if there is an edge $e$ in $[G_1,G_2]$ satisfying the following:
\begin{enumerate}[(a)]
	\item $L_G^\uparrow(e) \cap R_G^\uparrow(e) \neq \emptyset$ or $L_G^\downarrow(e) \cap R_G^\downarrow(e) \neq \emptyset$;
	\item $L_G^\uparrow(e) \cap R_G^\downarrow(e)\neq \emptyset$ or $L_G^\downarrow(e) \cap R_G^\uparrow(e) \neq \emptyset$.
\end{enumerate}
\end{lemma}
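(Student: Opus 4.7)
My plan is to prove the contrapositive by exhibiting a subdivision of $K_{3,2}$ inside $G$ and then appealing to Theorem~\ref{thm:outpl}. Fix an edge $e \in [G_1, G_2]$ together with witnesses $f$ (for (a)) and $g$ (for (b)). The pair $(f, g)$ admits four distinct membership combinations a priori, but all four are interchangeable by applying a suitable composition of the symmetry operations (L), (R), (S) recalled just before the lemma. I will therefore assume without loss of generality that
\[ f \in L^\uparrow_G(e) \cap R^\uparrow_G(e) \quad \text{and} \quad g \in L^\uparrow_G(e) \cap R^\downarrow_G(e), \]
that is, $l(e) < l(f)$, $l(e) < l(g)$, and $r(g) < r(e) < r(f)$.

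Next, I would consider the subgraph $H$ obtained from the $(u_{l(e)}, u_{\max\{l(f), l(g)\}})$-section of $G_1$, the $(v_{r(g)}, v_{r(f)})$-section of $G_2$, and the three cross edges $e$, $f$, $g$. Inside $H$, precisely two vertices have degree three, namely $v_{r(e)}$ (which lies strictly between $v_{r(g)}$ and $v_{r(f)}$ on $G_2$) and $w := u_{\min\{l(f), l(g)\}}$ (the first cross-attachment encountered when walking right from $u_{l(e)}$); every other vertex of $H$ has degree two. Three internally vertex-disjoint $(w, v_{r(e)})$-paths in $H$ are then visible: (i) walk left along $G_1$ from $w$ to $u_{l(e)}$ and cross via $e$; (ii) cross at $w$ via whichever of $f,g$ is attached there, then walk along $G_2$ to $v_{r(e)}$; (iii) walk right along $G_1$ from $w$ to $u_{\max\{l(f), l(g)\}}$, cross via the remaining cross edge, and walk along $G_2$ back to $v_{r(e)}$. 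The strict inequalities on the $l$- and $r$-coordinates force the three paths to occupy pairwise disjoint regions of $H$ away from their two endpoints. A minor subcase in which $l(f) = l(g)$ collapses path (iii)'s $G_1$-walk to a single vertex while leaving the three paths internally disjoint.

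Combining these observations, $H$ is a subdivision of $K_{3,2}$ with branch vertices $w$ and $v_{r(e)}$, so by Theorem~\ref{thm:outpl} the graph $G$ is not outerplanar. The main obstacle I anticipate is the symmetry reduction: being explicit about which composition of (L), (R), (S) transports each of the remaining three configurations of $(f,g)$ to the canonical one chosen above, while keeping track of how the roles of $f$ and $g$ can swap under these operations. After that reduction, the disjoint-path verification follows immediately from the index arithmetic, so no serious calculation is needed.
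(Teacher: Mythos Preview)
Your argument is correct and essentially the same as the paper's: both reduce the four configurations to one via the symmetries (L), (R), (S) and then exhibit a $K_{3,2}$ subdivision using the appropriate sections of $G_1$ and $G_2$ together with the three cross edges. The paper differs only cosmetically—it fixes a different canonical case and describes the subdivision via an explicit bipartition $(X,Y)$ with $|X|=3$, $|Y|=2$ rather than as a theta graph—so when writing it up just make explicit that your strict inequalities force each of the three $(w,v_{r(e)})$-paths to have length at least~$2$, which is what makes the theta graph a genuine $K_{3,2}$ subdivision.
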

\begin{proof}
	Let $G$ be a generalized $(m,n)$-ladder graph for some positive integers $m$ and $n$.
	Suppose that there is an edge $e$ in $[G_1,G_2]$ satisfying the hypothesis, that is, one of the following holds:
\begin{enumerate}[(i)]
	\item $L_G^\uparrow(e) \cap R_G^\downarrow(e)\neq \emptyset$ and $L_G^\uparrow(e) \cap R_G^\uparrow(e) \neq \emptyset$;	
	\item $L_G^\downarrow(e) \cap R_G^\uparrow(e) \neq \emptyset$ and $L_G^\uparrow(e) \cap R_G^\uparrow(e) \neq \emptyset$;
	\item $L_G^\uparrow(e) \cap R_G^\downarrow(e)\neq \emptyset$ and $L_G^\downarrow(e) \cap R_G^\downarrow(e) \neq \emptyset$;
	\item $L_G^\downarrow(e) \cap R_G^\uparrow(e) \neq \emptyset$ and $L_G^\downarrow(e) \cap R_G^\downarrow(e) \neq \emptyset$. \end{enumerate}
	\item
By (S), the case (i) is equivalent to the case (ii).
By (R), the case (iii) is equivalent to the case (ii).
By (L), the case (iv) is equivalent to the case (i).
Thus it suffices to show that (i) implies $G$ being not outerplanar.
Now we assume the case (iii).
Then $L_G^\uparrow(e) \cap R_G^\downarrow(e)\neq \emptyset$ and $L_G^\downarrow(e) \cap R_G^\downarrow(e) \neq \emptyset$
and so there are edges $a$ and $b$ such that
\[
a \in L^\uparrow_G (e) \cap R^\downarrow_G(e) \quad \text{and} \quad b \in L^\downarrow_G(e) \cap R^\downarrow_G(e),
\]
that is,
\[
l(b) < l(e) < l(a) \quad  \text{and} \quad \max\{r(a),r(b)\} < r(e) .
\]

We claim that $G$ has a $K_{3,2}$ minor.
Let \[\alpha=\min(r(a),r(b)) \quad \text{and} \quad \alpha'=\max(r(a),r(b)).\]
Denote the $(u_{l(b)}, u_{l(a)})$-section of $G_1$ and the $(v_{\alpha}, v_{r(e)})$-section of $G_2$ by $Q_1$ and $Q_2$, respectively.
We consider a subgraph $H$ of $G$ induced by the edge set
\[
E(Q_1) \cup E(Q_2) \cup \{a, b, e\}.
\]
Let $X = \{u_{l(a)}, u_{l(b)}, v_{r(e)}\}$ and $Y = \{u_{l(e)}, v_{\alpha'}\}$.
As we defined in the proof of Theorem~\ref{thm:planar}, we say that a path in $H$ is {\it good} if its end vertices are in $X \cup Y$ and it contains no interior vertex in $X\cup Y$.
Then any two good paths in $H$ are internally disjoint since, in $H$, the vertices in $X \cup Y$ are the only possible vertices of degree $3$ and the other vertices have degree $2$.
We will show that $H$ is a subdivision of $K_{3,2}$ with bipartition $(X,Y)$.
We note that in $H$, $v_{r(e)}$ and $u_{l(e)}$ are adjacent and there exist a $(u_{l(a)}, u_{l(e)})$-path, a $(u_{l(b)}, u_{l(e)})$-path, and a $(v_{r(e)}, v_{\alpha'})$-path each of which is a good path.

If $\alpha'=r(c)$ for some $c\in \{a,b\}$, the edge $c$ is a good $(u_{l(c)}, v_{\alpha'})$-path while the edge $d$ and the $(v_{\alpha},v_{\alpha'})$-section of $Q_2$ form a good $(u_{l(d)}, v_{\alpha'})$-path where $\{c,d\}= \{a,b\}$.
Thus there are good $(u_{l(a)}, v_{\alpha'})$-path and $(u_{l(b)}, v_{\alpha'})$-path.
Therefore we have shown that $H$ is a subdivision of $K_{3,2}$ with bipartite $(X,Y)$.
Hence, by Theorem~\ref{thm:outpl}, $G$ is not outerplanar.
\end{proof}

\begin{proof}[Proof of Theorem~\ref{thm:outerplanar}]
$\Rightarrow)$
We show the contrapositive.
Let $G$ be a generalized $(m,n)$-ladder graph for some positive integers $m$ and $n$.
Suppose that there are edges $e$ and $f$ in $[G_1,G_2]$ such that

\begin{equation}\label{eq:e,f}
		\begin{aligned}
		L_G^{\uparrow}(e) \cap R_G^{\uparrow}(e) \neq \emptyset  \quad &\text{or} \quad L_G^{\downarrow}(e) \cap R_G^{\downarrow}(e)\neq \emptyset; \\
		L_G^\uparrow(f)\cap R_G^\downarrow(f)\neq \emptyset \quad &\text{or} \quad L_G^\downarrow(f) \cap R_G^\uparrow(f) \neq \emptyset.
	\end{aligned}
	\end{equation}
If $e=f$, then $G$ is not outerplanar by Lemma~\ref{lem:outerplanar}.
Suppose that $e$ and $f$ are distinct.
If $l(e)<l(f)$ and $r(f)<r(e)$ (resp.\ $l(f)<l(e)$ and $r(e)<r(f)$), then $f \in L_G^{\uparrow}(e) \cap R_G^{\downarrow}(e)$ (resp.\ $f \in L_G^{\downarrow}(e) \cap R_G^{\uparrow}(e)$) and so $G$ is not outerplanar by Lemma~\ref{lem:outerplanar} applied to $e$.
If $l(e)<l(f)$ and $r(e)<r(f)$ (resp.\ $l(f)<l(e)$ and $r(f)<r(e)$), then $e \in L_G^{\downarrow}(f) \cap R_G^{\downarrow}(f)$ (resp.\ $e \in L_G^{\uparrow}(f) \cap R_G^{\uparrow}(f)$) and so $G$ is not outerplanar by Lemma~\ref{lem:outerplanar} applied to $f$.
Thus we conclude that if $l(e) \neq l(f)$ and $r(e) \neq r(f)$, then $G$ is not outerplanar.

Now we assume that either $l(e)=l(f)$ or $r(e)=r(f)$.
By (S), the case of $l(e)=l(f)$ is equivalent to the case of $r(e)=r(f)$.
Moreover, by (L) and the symmetry of $e$ and $f$, the case where $l(e)=l(f)$ and $r(e)<r(f)$ is equivalent to the case where $l(e)=l(f)$ and $r(f)<r(e)$.
Thus it suffices to consider the case where $l(e)=l(f)$ and $r(e) <r(f)$.

Assume that
\[l(e)=l(f) \quad \text{and} \quad r(e) <r(f). \]
Note that by \eqref{eq:e,f}, there are edges $a$ and $b$ in $[G_1,G_2]$ such that
\[a \in \left(L_G^{\uparrow}(e) \cap R_G^{\uparrow}(e)\right) \cup \left( L_G^{\downarrow}(e) \cap R_G^{\downarrow}(e)\right) \]
and
\[b \in \left(L_G^\uparrow(f)\cap R_G^\downarrow(f)\right) \cup \left( L_G^\downarrow(f) \cap R_G^\uparrow(f)\right).\]
If $a \in L_G^{\downarrow}(e) \cap R_G^{\downarrow}(e)$, then $l(a)<l(e)=l(f)$ and $r(a)<r(e)<r(f)$ and so $a \in L_G^\downarrow(f)\cap R_G^\downarrow(f)$, which implies that $G$ is not outerplanar by Lemma~\ref{lem:outerplanar} applied to $f$.
If $b \in L_G^\downarrow(f)\cap R_G^\uparrow(f)$, then $l(b)<l(f)=l(e)$ and $r(b)>r(f)>r(e)$ and so $b \in L_G^\downarrow(e)\cap R_G^\uparrow(e)$, which implies that $G$ is not outerplanar by Lemma~\ref{lem:outerplanar} applied to $e$.
Suppose that \[a \in L_G^{\uparrow}(e) \cap R_G^{\uparrow}(e) \quad \text{and} \quad b \in L_G^\uparrow(f)\cap R_G^\downarrow(f),\]
that is,
\[e \in L_G^{\downarrow}(a) \cap R_G^{\downarrow}(a) \quad \text{and} \quad f \in L_G^\downarrow(b)\cap R_G^\uparrow(b). \]
Thus, by Lemma~\ref{lem:outerplanar}, to claim that $G$ is not outerplanar, it is sufficient to show that one of the following set contains an element:
\begin{align*}
	&L_G^{\uparrow}(a) \cap R_G^{\downarrow}(a); \quad L_G^{\downarrow}(a) \cap R_G^{\uparrow}(a); \quad L_G^{\uparrow}(b) \cap R_G^{\uparrow}(b); \quad L_G^{\downarrow}(b) \cap R_G^{\downarrow}(b); \\
	&L_G^{\uparrow}(e) \cap R_G^{\downarrow}(e); \quad L_G^{\downarrow}(e) \cap R_G^{\uparrow}(e); \quad L_G^{\uparrow}(f) \cap R_G^{\uparrow}(f); \quad L_G^{\downarrow}(f) \cap R_G^{\downarrow}(f).
\end{align*}
We also note that
\[l(e)=l(f)<\min\{l(a), l(b)\}, \quad r(e)<r(a), \quad  \text{and} \quad r(b)<r(f).\]
If $r(a)<r(f)$ (resp.\ $r(b)>r(e)$), then $f\in L_G^{\downarrow}(a) \cap R_G^{\uparrow}(a)$ (resp.\ $e\in L_G^{\downarrow}(b) \cap R_G^{\downarrow}(b)$).
If $r(a)>r(f)$ (resp.\ $r(b)<r(e)$), then $a\in L_G^{\uparrow}(f) \cap R_G^{\uparrow}(f)$ (resp.\ $b\in L_G^{\uparrow}(e) \cap R_G^{\downarrow}(e)$).
Assume that $r(a)=r(f)$ and $r(b)=r(e)$.
Then
\[ r(b)=r(e)<r(a)=r(f). \]
If $l(a)<l(b)$ (resp.\ $l(b)<l(a)$), then $b \in L_G^{\uparrow}(a) \cap R_G^{\downarrow}(a)$ (resp.\ $a \in L_G^{\uparrow}(b) \cap R_G^{\uparrow}(b)$).

Suppose $l(a)=l(b)$.
Then
\[l(e)=l(f)<l(a)=l(b). \]
By the edge $a$ (resp.\ $e$) in $[G_1,G_2]$, $u_{l(a)}$ and $v_{r(a)}$ (resp.\ $u_{l(e)}$ and $v_{r(e)}$) are adjacent.
Since $l(f)=l(e)$ and $r(f)=r(a)$ (resp.\ $l(b)=l(a)$ and $r(b)=r(e)$), $u_{l(e)}$ and $v_{r(a)}$ (resp.\ $u_{l(a)}$ and $v_{r(e)}$) are adjacent by the edge $f$ (resp.\ $b$) in $[G_1,G_2]$.
Now, the edges $u_{l(a)}v_{r(a)}$, $u_{l(e)}v_{r(e)}$, $u_{l(e)}v_{r(a)}$, $u_{l(a)}v_{r(e)}$, the $(u_{l(e)},u_{l(a)})$-section of $G_1$, and the $(v_{r(e)},v_{r(a)})$-section of $G_2$ form a $K_4$-minor of $G$.
Therefore, by Theorem~\ref{thm:outpl}, $G$ is not outerplanar and this completes the proof of the ``only if" part.

$\Leftarrow)$ Let $G$ be a generalized $(m,n)$-ladder graph for some positive integers $m$ and $n$.
Suppose that one of the following holds:
\begin{itemize}
\item[(i)] $L_G^{\uparrow}(e) \cap R_G^{\downarrow}(e)=L_G^{\downarrow}(e) \cap R_G^{\uparrow}(e)=\emptyset$ for each edge $e \in [G_1, G_2]$;
\item[(ii)] $L_G^{\uparrow}(e) \cap R_G^{\uparrow}(e)=L_G^{\downarrow}(e) \cap R_G^{\downarrow}(e)=\emptyset$ for each edge $e \in [G_1, G_2]$.
\end{itemize}
By (R), the case (i) is equivalent to the case (ii).
Thus it suffices to show that (i) implies $G$ being outerplanar.
We assume the case (i).
Then
\begin{equation}\label{eq:if}
	L_G^{\uparrow}(e) \cap R_G^{\downarrow}(e)=L_G^{\downarrow}(e) \cap R_G^{\uparrow}(e)=\emptyset \quad \text{for each edge } e \in [G_1, G_2].
\end{equation}
Now we embed the graph $G$ in the plane as follows:
\begin{itemize}
	\item Plot the vertices of $G$ by locating $u_i$ and $v_j$ at $(0,i)$ and $(1,j)$, respectively, for each $i \in [m]$ and each $j\in [n]$.
	\item Draw a straight line from $(0,1)$ to $(0,m)$ which represents $G_1$ and draw a straight line from $(1,1)$ to $(1,n)$ which represents $G_2$.
	\item Draw a straight line from $u_{l(e)}$ to $v_{r(e)}$ representing each edge $e \in [G_1,G_2]$.
\end{itemize}

To show that there is no crossing in the above drawing, it suffices to check whether some two edges in $[G_1,G_2]$ cross or not.
Take two distinct edges $e$ and $e'$ in $[G_1,G_2]$.
Then $l(e) \neq l(e')$ or $r(e) \neq r(e')$.
We first consider the former case.
Without loss of generality, assume $l(e) < l(e')$.
Then $e' \in L_G^{\uparrow}(e)$.
Since $L_G^{\uparrow}(e) \cap R_G^{\downarrow}(e)=\emptyset$ by \eqref{eq:if}, $e' \not \in R_G^{\downarrow}(e)$.
Thus $r(e) \le r(e')$.
Then either $v_{r(e')}=v_{r(e)}$ or $v_{r(e')}$ lies above $v_{r(e)}$ on the line $x=1$.
Since $u_{l(e')}$ lies above $u_{l(e)}$ on the line $x=0$, just in case where $e$ and $e'$ cross, they only intersect at the point $v_{r(e')}=v_{r(e)}$.
Now we consider the latter case, that is, $r(e) \neq r(e')$.
By a similar argument, one may check that if $e$ and $e'$ intersect, they only intersect at the point $u_{l(e')}=u_{l(e)}$.
Thus there is no accidental crossing for distinct two edges in $[G_1,G_2]$.
Therefore the above drawing is a planar embedding of $G$.

It is easy to check that each edge of $G$ is contained in the region
\[\mathcal{R}=\{(x,y)\colon\, 0\le x \le 1 \}. \]
Thus the outer face contains the region $\mathbb{R}^2-\mathcal{R}$.
Since each vertex of $G$ lies on $\partial \mathcal{R}$, the outer face contains every vertex of $G$.
Hence the above drawing is an outerplanar embedding of $G$.
\end{proof}

\section{Acknowledgement}
This work was supported by Science Research Center Program through the National Research Foundation of Korea(NRF) Grant funded by the Korean Government (MSIP)(NRF-2022R1A2C\\1009648 and 2016R1A5A1008055).
Especially, the first author was supported by Basic Science Research Program through the National Research Foundation of Korea(NRF) funded by the Ministry of Education (NRF-2022R1A6A3A13063000).

\end{document}